\numberwithin{equation}{section}
\newtheorem{theorem}{Theorem}[section]
\newtheorem{lemma}[theorem]{Lemma}
\theoremstyle{definition}
\newtheorem{definition}[theorem]{Definition}
\newtheorem{example}[theorem]{Example}
\theoremstyle{remark}
\begin{document}
\title{Fractional Differential Equations with  Periodic Boundary Conditions of Constant Ratio}
\author{{Anwarrud Din}\\
\small\textit{Department of Mathematics, Sun Yat-sen University, Guangzhou 510275, China}\\
\small\textit{{E}-mail: anwarm.phil@yahoo.com}\\
{Shah Faisal}\\
\small\textit{Department of Mathematics, University of Peshawar, Peshawar 25000, Pakistan}\\
\small\textit{{E}-mail: shahfaisal8763@gmail.com}}
\date{}
\maketitle
\begin{abstract}
This article  is concerned with the existence and uniqueness of solutions to some  fractional order boundary value problems
of the type
 \begin{equation*}\begin{split}
^{c}D^\alpha_{0+} u(t)= f(t, u(t),^{c}D^{\alpha-1}u(t)) , ~~~~~
1<\alpha\leq 2,\, t\in J=[0,1]\\
u(0)= \xi u(1),~~ ^{c}D^\beta u(0)= \xi \,\,^{c}D^{\beta}u(1),~~
0<\beta<1,~~\xi\in ,(0,1), \end{split}\end{equation*} where $
^{c}D^{\alpha}_{0+}$ represents  Caputo fractional order derivative
and $f:J\times \mathbb{R}\times \mathbb{R}\rightarrow \mathbb{R}$ is continuous
function explicitly depending on fractional order derivative. Our
results are based on  some fixed point theorems of functional
analysis. For the applicability of our results, we provide an
example.
\end{abstract}
\textbf{ Keywords.}{ Fractional order differential equations;  Boundary value problems; Positives solution; Uniqueness result.}\\\\
\textbf{ Mathematics Subject Classification:} 34A08, 34B37, 35R11
\section{Introduction}
In the last few decades it was realized that in some real world problems, fractional order models are more adequate and accurate
compared to their counterpart integer-order models. The main advantage of fractional models in comparison with classical integer
models is that it provide an excellent tool for description of memory and heredity properties of various  materials and processes. This theory has many application in different field of sciences like engineering, physics, chemistry, biology etc,  see for example
\cite{4,5,6,7,8,9,10} and reference therein. In consequence, the subject of fractional order differential equations gaining much
importance and attentions. But it should be noted that most of the available literature  are devoted to the solvability of linear
initial value problems for fractional order differential equation in term of special functions \cite{3,4,5,6}.

Existence theory corresponding to boundary value problems for fractional order differential equations had attracted the attention
of researchers quite recently. There are some work dealing with the existence and multiplicity of positive solution to non-linear
initial value problems associated with fractional order differential equation, \cite{7,8,9,10}. Recently, M. Benchohra and N. Hamidi and J. Henderson
\cite{11} studied non-linear fractional order differential equation with periodic boundary. However, the problem with the boundary
conditions we study had never been studied before. We are concern with the  existence and uniqueness of positive solutions to boundary
value problem of the form
\begin{equation}\label{eqn1}\begin{split}
^{c}D^\alpha_{0+} u(t)= f(t, u(t),^{c}D^{\alpha-1}u(t)) , ~~~~~
1<\alpha\leq 2,\, t\in J=[0,1]\\u(0)= \xi u(1),~~ ^{c}D^\beta u(0)=
\xi \,\,^{c}D^{\beta}u(1),~~  0<\beta<1,~~\xi\in ,(0,1).
\end{split}\end{equation} We use results from the functional
analysis.
We recall some notations, definition and necessary lemmas  \cite{1,2,3} necessary for our investigation.
  The Banach space of all continuous functions from $J=[0, 1]$ into
$\mathbb{R}$ with the norm $ \|y\|_{\infty}=\sup\{|y(t)|; 0\leq t\leq 1\} $
is denoted by $ C(J,\mathbb{R})$ and   Banach space of Lebesgue integrable
function with the norm $\|y\|_{L^{1}}=\int_{0}^{1}|y(t)|dt$ is
denoted by  $L^{1}(J,\mathbb{R})$. Let us denote by $\tilde{C}(J,\mathbf{R})=\{u\in
C(J,\mathbb{R}),\,^{c}D^{\alpha-1}u\in C(J,\mathbb{R})\}$, then  $\tilde{C}(J,\mathbb{R})$
is a Banach space under the norm
$||u||_{\tilde{c}}=\max\{||u||_{\infty},||^{c}D^{\alpha-1}u||_{\infty}\}$.
\begin{definition}
The fractional order integral of the function $h\in l^1(J,\mathbb{R})$ of order $ \alpha\in \mathbb{R}$, is defined by
\begin{equation*}
I^{\alpha}_{0}f(t)=\frac{1}{\Gamma\alpha}\int ^{t}_{0}(t-s)^{\alpha-1}f(s)ds,
\end{equation*}
where $ \Gamma $ is the gamma function.
\end{definition}
\begin{definition}\label{d2}
For a  function $f$ the $\alpha$th order Caputo fractional
derivative is define  by
\begin{equation*}
(^{c}D^{\alpha}_{0+})f(t)=\frac{1}{\Gamma(n-\alpha)}\int ^{t}_{0}
(t-s)^{n-\alpha-1}f^{(n)}(s)ds,
\end{equation*}
where $n$ is integer such that $ n=\lceil \alpha\rceil$.
\end{definition}
\begin{lemma}
Let $\alpha>0$ and $u\in C(0,1)\cap L(0,1)$,  the fractional order differential equation
\begin{equation*}
D^{\alpha}_{0+}u(t)=0,\,n-1<\alpha<n
\end{equation*}
has a unique solution of the form
\begin{equation*}
u(t)=C_{0}+C_{1}t+........+C_{n-1}t^{n-1},~~~~ C_{i}\in \mathbb{R},~~~~
i=1,2,3........,n-1.
\end{equation*}
\end{lemma}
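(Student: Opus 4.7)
The plan is to show that the equation $D^{\alpha}_{0+}u(t)=0$ (interpreted, as the context of the paper dictates, as the Caputo derivative of order $\alpha$ with $n-1<\alpha<n$) forces $u$ to be a polynomial of degree at most $n-1$, and that all such polynomials solve the equation. Since the Caputo derivative of a polynomial of degree $n-1$ vanishes identically, the ``sufficiency'' direction is routine, so the real content is the ``necessity'' direction plus uniqueness given the $n$ free parameters.

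First I would rewrite the hypothesis using Definition \ref{d2}:
\begin{equation*}
^{c}D^{\alpha}_{0+}u(t)=\frac{1}{\Gamma(n-\alpha)}\int_{0}^{t}(t-s)^{n-\alpha-1}u^{(n)}(s)\,ds = I^{n-\alpha}_{0}u^{(n)}(t)=0,
\end{equation*}
so the problem reduces to showing that the fractional integral operator $I^{n-\alpha}_{0}$ is injective on the relevant class of functions. Once that injectivity is established, one concludes $u^{(n)}(s)\equiv 0$ on $(0,1)$, and then $n$-fold ordinary integration yields $u(t)=C_{0}+C_{1}t+\cdots+C_{n-1}t^{n-1}$ for constants $C_{0},\ldots,C_{n-1}\in\mathbb{R}$.

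An alternative route, which I would use to handle uniqueness cleanly, is to apply $I^{\alpha}_{0}$ directly to both sides of $^{c}D^{\alpha}_{0+}u(t)=0$ and invoke the standard composition identity
\begin{equation*}
I^{\alpha}_{0}\,{}^{c}D^{\alpha}_{0+}u(t)=u(t)-\sum_{k=0}^{n-1}\frac{u^{(k)}(0)}{k!}\,t^{k},
\end{equation*}
which immediately gives $u(t)=\sum_{k=0}^{n-1}\frac{u^{(k)}(0)}{k!}t^{k}$, identifying the coefficients as $C_{k}=u^{(k)}(0)/k!$. Uniqueness then follows because any two representations of the form $C_{0}+C_{1}t+\cdots+C_{n-1}t^{n-1}$ that agree as functions on $J$ must have identical coefficients (the monomials $1,t,\ldots,t^{n-1}$ are linearly independent in $C(J,\mathbb{R})$).

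The main obstacle I anticipate is justifying the injectivity of $I^{n-\alpha}_{0}$ (equivalently, the composition formula above) at the stated level of regularity $u\in C(0,1)\cap L(0,1)$, since the Caputo definition nominally requires $u^{(n)}$ to exist in an appropriate integrable sense. I would finesse this by appealing to the classical fractional calculus results cited as \cite{1,2,3}, where these composition/injectivity statements are proved for exactly this function class, and simply quote them rather than reprove them from scratch. Everything else is routine integration.
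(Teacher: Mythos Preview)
Your proposal is correct and, in fact, goes well beyond what the paper does: the paper states this lemma without proof, treating it as a standard preliminary fact drawn from the references \cite{1,2,3}. So there is no ``paper's own proof'' to compare against here.

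Your argument is the standard one: interpret $^{c}D^{\alpha}_{0+}u=0$ as $I^{n-\alpha}_{0}u^{(n)}=0$, invoke injectivity of the Riemann--Liouville integral (or equivalently the composition identity $I^{\alpha}_{0}\,{}^{c}D^{\alpha}_{0+}u(t)=u(t)-\sum_{k=0}^{n-1}\frac{u^{(k)}(0)}{k!}t^{k}$, which is exactly the content of the paper's next Lemma~\ref{l4}), and read off the polynomial form. Your instinct to cite \cite{1,2,3} for the injectivity/composition step under the stated regularity $u\in C(0,1)\cap L(0,1)$ is precisely what the paper itself does implicitly by listing the lemma among the ``notations, definitions and necessary lemmas'' recalled from those sources. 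In short, your proposal supplies a proof where the paper supplies none; nothing needs to be changed.
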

\begin{lemma}\label{l4}
Let $\alpha> 0$  then
\begin{equation*}
I^{\alpha}\,^{c}D_0^{\alpha}u(t)=u(t)+C_{0}+C_{1}t+........+C_{n-1}t^{n-1},~~~~\mbox{for},\,
C_{i}\in \mathbb{R},~~~~ i=1,2,3........,n-1,\, \alpha\leq n< \alpha+1
\end{equation*}
\end{lemma}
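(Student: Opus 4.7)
The plan is to reduce the statement to the classical Taylor formula with integral remainder, using the semigroup property of the Riemann--Liouville integral. First, I would rewrite the Caputo derivative using Definition \ref{d2} as a Riemann--Liouville integral of the ordinary derivative $u^{(n)}$:
\begin{equation*}
{}^{c}D_{0}^{\alpha} u(t) = \frac{1}{\Gamma(n-\alpha)} \int_{0}^{t} (t-s)^{n-\alpha-1} u^{(n)}(s)\,ds = I^{n-\alpha} u^{(n)}(t),
\end{equation*}
where $n = \lceil \alpha \rceil$ (or $n = \alpha$ when $\alpha$ is an integer). Applying $I^{\alpha}$ to both sides transforms the left-hand side into $I^{\alpha} I^{n-\alpha} u^{(n)}(t)$.

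Next, I would invoke the semigroup identity $I^{\alpha} I^{\beta} = I^{\alpha+\beta}$ for $\alpha,\beta>0$. This identity follows from Fubini's theorem and the Beta-function integral $\int_{s}^{t} (t-\tau)^{\alpha-1}(\tau-s)^{\beta-1}\,d\tau = B(\alpha,\beta)(t-s)^{\alpha+\beta-1}$; the hypothesis $u \in C(0,1) \cap L(0,1)$ suffices to justify the interchange of integration order. Consequently,
\begin{equation*}
I^{\alpha}\, {}^{c}D_{0}^{\alpha} u(t) = I^{n} u^{(n)}(t).
\end{equation*}

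To evaluate $I^{n} u^{(n)}(t)$ I would perform $n$ successive integration by parts (equivalently, invoke the Taylor expansion with integral remainder), which gives
\begin{equation*}
I^{n} u^{(n)}(t) = u(t) - \sum_{k=0}^{n-1} \frac{u^{(k)}(0)}{k!}\, t^{k}.
\end{equation*}
Setting $C_{k} := -\tfrac{1}{k!} u^{(k)}(0)$ yields the claimed identity, with the arbitrary constants absorbing the polynomial boundary terms. A short induction on $n$ handles the integration-by-parts step cleanly: the base case $n=1$ is the fundamental theorem of calculus applied to $I^{1} u'(t) = u(t) - u(0)$, and the inductive step uses $I^{n} u^{(n)} = I^{n-1}(I^{1} u^{(n)})$ together with the identity established at the previous stage.

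The only delicate point is the justification of the semigroup property $I^{\alpha}I^{n-\alpha} = I^{n}$ when $\alpha$ is non-integer, since it requires Fubini on an integrand with a mild endpoint singularity; this is where one must use that $u^{(n)}$ is at worst $L^{1}$ so that the iterated integral is absolutely convergent. Once this composition law is in hand, everything else is a direct calculation, and the result takes the stated form up to relabelling of constants.
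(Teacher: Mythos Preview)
Your argument is correct and is exactly the standard textbook derivation (see, e.g., Podlubny~\cite{1}): rewrite ${}^{c}D_{0}^{\alpha}u = I^{n-\alpha}u^{(n)}$, compose via the semigroup law $I^{\alpha}I^{n-\alpha}=I^{n}$, and then read off the Taylor remainder $I^{n}u^{(n)}(t)=u(t)-\sum_{k=0}^{n-1}\frac{u^{(k)}(0)}{k!}t^{k}$. The identification $C_{k}=-u^{(k)}(0)/k!$ is the right one, and your remark on the Fubini justification for the semigroup identity is appropriate.

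There is nothing to compare against, however: the paper does not supply its own proof of this lemma. It is listed among the preliminary facts recalled from the references~\cite{1,2,3} and is simply quoted without argument. So your write-up is not an alternative route but rather fills in a proof the paper deliberately omits as background. One small caution: the regularity hypothesis $u\in C(0,1)\cap L(0,1)$ that you invoke actually belongs to the preceding lemma in the paper, not to this one; Lemma~\ref{l4} as stated carries no explicit assumptions on $u$, so if you want to be self-contained you should state the smoothness needed (namely $u^{(n)}\in L^{1}$) directly rather than borrowing it.
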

\section{Main results}
Now we study sufficient conditions for existence and uniqueness of
solutions.
\begin{lemma}\label{a} For $y\in C(J)$, the linear fractional order boundary value problem
\begin{equation*}\begin{split}
^{c}D^\alpha u(t)= y(t), ~~~~~ 1<\alpha\leq 2,\,\, t\in J=[0,1]\\
u(0)= \xi u(1),\, ^{c}D^\beta u(0)= \xi\,\,^{c}D^{\beta}u(1), \,
0<\beta<1~~\xi\in (0,1)
\end{split}\end{equation*} has a solution of the form $ u(t)= \int_{0}^{1} G(t, s)y(s)ds,$
where the Green function is
\begin{eqnarray*}
G(t,s)&=&\left\{  \begin{array}{l}
\frac{(1-\xi)(t-s)^{\alpha-1}+\xi(1-s)^{\alpha-1}}{\Gamma\alpha(1-\xi)}-\frac{\Gamma(2-\beta)(\xi+(1-\xi)t)}{\Gamma(\alpha-\beta)(1-\xi)}(1-s)^{\alpha-\beta-1}, \\ \hspace{7.5 cm} \,\mbox{if}\,\,\,\,\,~0\leq s\leq t\leq 1\cr\cr
\frac{{\xi(1-s)^{\alpha-1}}}{{\Gamma\alpha(1-\xi)}}-\frac{\Gamma(2-\beta)(\xi+(1-\xi)t)}{\Gamma(\alpha-\beta)(1-\xi)}(1-s)^{\alpha-\beta-1},\hspace{1 cm} \mbox{if}\,\,\,~0\leq t\leq s\leq 1,
\end{array}\right.
\end{eqnarray*}
\end{lemma}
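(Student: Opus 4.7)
The plan is to reduce the fractional BVP to an integral equation via Lemma \ref{l4}, determine the two integration constants using the nonlocal boundary data, and then combine terms so that the kernel appears as a single Green's function.

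First I would apply Lemma \ref{l4}. Since $1<\alpha\leq 2$ we have $n=2$, so the general solution of $^cD^\alpha u(t)=y(t)$ can be written as
\begin{equation*}
u(t)=I^{\alpha}y(t)-C_{0}-C_{1}t=\frac{1}{\Gamma(\alpha)}\int_{0}^{t}(t-s)^{\alpha-1}y(s)\,ds-C_{0}-C_{1}t,
\end{equation*}
for some constants $C_{0},C_{1}\in\mathbb{R}$. Using $I^{\alpha-\beta}y(0)=0$ and the standard identities $^{c}D^{\beta}(1)=0$, $^{c}D^{\beta}t=\frac{t^{1-\beta}}{\Gamma(2-\beta)}$, $^{c}D^{\beta}I^{\alpha}y(t)=I^{\alpha-\beta}y(t)$, I would then compute
\begin{equation*}
^{c}D^{\beta}u(t)=I^{\alpha-\beta}y(t)-\frac{C_{1}}{\Gamma(2-\beta)}t^{1-\beta}.
\end{equation*}

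Next I would substitute these expressions into the two boundary conditions. The condition $^{c}D^{\beta}u(0)=\xi\,^{c}D^{\beta}u(1)$ yields $0=\xi\bigl(I^{\alpha-\beta}y(1)-\frac{C_{1}}{\Gamma(2-\beta)}\bigr)$, hence $C_{1}=\Gamma(2-\beta)\,I^{\alpha-\beta}y(1)$. The condition $u(0)=\xi u(1)$, i.e.\ $-C_{0}=\xi\bigl(I^{\alpha}y(1)-C_{0}-C_{1}\bigr)$, then gives
\begin{equation*}
C_{0}=-\frac{\xi}{1-\xi}\bigl(I^{\alpha}y(1)-\Gamma(2-\beta)\,I^{\alpha-\beta}y(1)\bigr).
\end{equation*}
Here the assumption $\xi\in(0,1)$ is used to ensure $1-\xi\neq 0$.

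Finally I would substitute $C_{0},C_{1}$ back into the expression for $u(t)$, obtaining three integrals over $[0,1]$ and one over $[0,t]$. Splitting the $[0,1]$ integrals at $t$, I would collect coefficients of $y(s)$ on $0\leq s\leq t$ and on $t\leq s\leq 1$ separately; on $[0,t]$ the terms $\frac{(t-s)^{\alpha-1}}{\Gamma(\alpha)}$ and $\frac{\xi(1-s)^{\alpha-1}}{(1-\xi)\Gamma(\alpha)}$ combine into $\frac{(1-\xi)(t-s)^{\alpha-1}+\xi(1-s)^{\alpha-1}}{(1-\xi)\Gamma(\alpha)}$, while on $[t,1]$ only the $\frac{\xi(1-s)^{\alpha-1}}{(1-\xi)\Gamma(\alpha)}$ piece survives; in both regions the $\Gamma(2-\beta)$ contribution assembles into $-\frac{\Gamma(2-\beta)(\xi+(1-\xi)t)}{(1-\xi)\Gamma(\alpha-\beta)}(1-s)^{\alpha-\beta-1}$. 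This produces exactly the two branches of $G(t,s)$ in the statement.

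The proof is essentially bookkeeping; the only mildly delicate step is matching the coefficient $\xi+(1-\xi)t$ in the second summand, which comes from combining the constant piece $-C_{0}$ with the linear piece $-C_{1}t$ after factoring out $\frac{\Gamma(2-\beta)}{(1-\xi)\Gamma(\alpha-\beta)}$. I would pay careful attention to the sign and the factor $1-\xi$ at that point to make the two pieces align cleanly.
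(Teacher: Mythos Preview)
Your proposal is correct and follows essentially the same route as the paper: apply Lemma~\ref{l4} to write $u(t)=I^{\alpha}y(t)+C_{0}+C_{1}t$ (up to your harmless sign convention), compute $^{c}D^{\beta}u$, solve for $C_{0},C_{1}$ from the two boundary conditions, and substitute back to read off $G(t,s)$. Your write-up is in fact more explicit than the paper's about the splitting at $s=t$ and the assembly of the factor $\xi+(1-\xi)t$; the paper simply states $C_{0},C_{1}$ and the final integral representation without spelling out those bookkeeping details.
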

\begin{proof}
By lemma (\ref{l4}), solution of the fractional order differential
equation $^{c}D^\alpha u(t)= y(t)$ is given by
\begin{equation*}
 u(t)=I^{\alpha} y(t)+C_{0} +C_{1}t\text { and }^{c}D^\beta u(t)= I^{\alpha-\beta} y(t)+C_1 \frac{t^{1-\beta}}{\Gamma (2-\beta)}.
\end{equation*}
Using the boundary conditions $u(0)=\xi u(1)$ and $^{c}D^\beta u(0)=
\xi ^{c}\,\,D^\beta u(1)$, we obtain
\begin{equation*}
C_{0}= \frac{\xi}{1-\xi} I^\alpha y(1)-\Gamma
(2-\beta)\frac{\xi}{1-\xi}I^{\alpha- \beta}y(1), \text{ and }C_{1}=
-\Gamma (2-\beta) I^{\alpha- \beta}.
\end{equation*}
Hence $u(t)=I^\alpha y(t)+\frac{\xi}{1-\xi}I^\alpha y(1)-\Gamma
(2-\beta)(\frac{\xi}{1-\xi}+t)I^{\alpha-\beta} y(1)$ which implies
that
\begin{eqnarray*}\begin{split}
u(t)&= \frac{1}{\Gamma \alpha} \int_0^t(t-s)^{\alpha-1} y(s)ds+
\frac{\xi}{1-\xi} \int_0^1(1-s)^{\alpha-1}y(s)ds\\&
-\frac{\Gamma(2-\beta)}{\Gamma(\alpha-\beta)}\big ( t+
\frac{\xi}{1-\xi})\int_0^1(1-s)^{\alpha-\beta-1}y(s)ds=\int_0^1 G(t,
s)y(s)ds.
\end{split}\end{eqnarray*}
\end{proof}
 If $ \alpha-\beta<1 $, the Green function $ G(t, s)$ become
unbounded but the function $ t:\rightarrow  \int_0^1 G(t, s)y(s)ds $
is continuous on $J$ so attain its spermium value say $ G^*
=\sup_{t\in j} \int_0^1 \mid G(t,s)\mid ds$. In view of Lemma
\eqref{a}, an equivalent representation of the BVP \eqref{eqn1} is
given by
\begin{equation} u(t)= \int_0^1 G(t,
s)f(s, u(s),^{c}D^{\alpha-1}u(s))ds,\,t\in J.
\end{equation}$T:\tilde{C}(J,\mathbb{R})\rightarrow
\tilde{C}(J,\mathbb{R})$ by
\begin{equation}\label{a3} Tu(t)=\int_0^1 G(t,s)f(s,u(s), ^{c}D^{q-1}u(s))\,ds,
\end{equation} then solutions of the BVP \eqref{eqn1}  are fixed points
of $T$.
\begin{theorem}
Assume that $f: J\times \mathbb{R} \times \mathbb{R}\rightarrow \mathbb{R}$ is continuous and the following hold\\
$(A_1)$ there exist $p \in C(J, \mathbb{R}^+)$ and a continuous,
non-decreasing function $\psi: [0, \infty] \rightarrow(0, \infty)$
such that
\begin{equation*}
\mid f(t, u, z)\mid \leq p(t) \psi(\mid z \mid)
~~~~~~~~~~~~~~~\mbox{for all}\, \,t\in J,\, u,\, z \in \mathbb{R}.
\end{equation*}
$(A_2)$ there exist constant $r > 0$ such that
\begin{equation*}
r\geq \max\big\{G^* p^*\psi(r), \,\,p^* \psi(r)
\frac{\Gamma(3-\alpha)\Gamma(\alpha-\beta+1)+\Gamma(2-\beta)}{\Gamma(3-\alpha)\Gamma(\alpha-\beta+1)}\big\},
\end{equation*}
where $p^*= \sup\big\{p(s),s\in J\big\},$

then the BVP \eqref{eqn1} has at lest one solution such that
$|u(t)|<r $  on $J$.
\end{theorem}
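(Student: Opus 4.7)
The plan is to apply Schauder's fixed point theorem to the operator $T$ defined in \eqref{a3}, restricted to the closed ball $B_r=\{u\in\tilde{C}(J,\mathbb{R}):\|u\|_{\tilde{c}}\le r\}$. For this I need three things: (i) $T(B_r)\subset B_r$, (ii) $T$ is continuous, and (iii) $T(B_r)$ is relatively compact. Any fixed point of $T$ in $B_r$ is, by Lemma~\ref{a}, a solution of \eqref{eqn1} obeying the stated bound on $|u(t)|$.

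For (i), I fix $u\in B_r$ and set $y(s)=f(s,u(s),{}^{c}D^{\alpha-1}u(s))$. By $(A_1)$ and monotonicity of $\psi$, $|y(s)|\le p^{*}\psi(r)$. Hence
\[|Tu(t)|\le p^{*}\psi(r)\int_0^1|G(t,s)|\,ds\le G^{*}p^{*}\psi(r)\le r\]
by the first half of $(A_2)$, which controls the $\|\cdot\|_\infty$ coordinate of the norm. For the derivative coordinate I start from the representation built in the proof of Lemma~\ref{a}, $Tu(t)=I^\alpha y(t)+C_0+C_1 t$ with $C_0,C_1$ as there, and use the identities ${}^{c}D^{\alpha-1}I^\alpha y(t)=Iy(t)=\int_0^t y(s)\,ds$, ${}^{c}D^{\alpha-1}(1)=0$ and ${}^{c}D^{\alpha-1}(t)=t^{2-\alpha}/\Gamma(3-\alpha)$, which together give
\[{}^{c}D^{\alpha-1}(Tu)(t)=\int_0^t y(s)\,ds-\frac{\Gamma(2-\beta)\,t^{2-\alpha}}{\Gamma(3-\alpha)}\,I^{\alpha-\beta}y(1).\]
Bounding $t\le 1$, $t^{2-\alpha}\le 1$ and $|I^{\alpha-\beta}y(1)|\le p^{*}\psi(r)/\Gamma(\alpha-\beta+1)$ produces exactly
\[\|{}^{c}D^{\alpha-1}Tu\|_\infty\le p^{*}\psi(r)\,\frac{\Gamma(3-\alpha)\Gamma(\alpha-\beta+1)+\Gamma(2-\beta)}{\Gamma(3-\alpha)\Gamma(\alpha-\beta+1)}\le r\]
by the second half of $(A_2)$, so $\|Tu\|_{\tilde{c}}\le r$.

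For (ii), if $u_n\to u$ in $\tilde{C}(J,\mathbb{R})$, continuity of $f$ gives pointwise convergence of the integrands defining $Tu_n$ and ${}^{c}D^{\alpha-1}Tu_n$, and the uniform dominating bound $p^{*}\psi(r)$ lets dominated convergence pass under both $\int G(t,\cdot)$ and $I^{\alpha-\beta}$, yielding convergence in $\|\cdot\|_{\tilde{c}}$. For (iii) I invoke Arzel\`a--Ascoli in both coordinates: uniform boundedness is step (i), while equicontinuity follows by splitting $Tu(t_2)-Tu(t_1)$ along the pieces of $G(t,s)$ and by differentiating the analogous representation for ${}^{c}D^{\alpha-1}Tu$, then using that $|(t_2-s)^{\alpha-1}-(t_1-s)^{\alpha-1}|$, $|(1-s)^{\alpha-\beta-1}(t_2-t_1)|$ and $|t_2^{2-\alpha}-t_1^{2-\alpha}|$ all tend to zero uniformly in $s\in[0,1]$ and in $u\in B_r$ as $|t_2-t_1|\to 0$.

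The main obstacle I foresee is the derivative-coordinate estimate in (i): the constant appearing in $(A_2)$ is tight, and reproducing it requires evaluating ${}^{c}D^{\alpha-1}$ separately on $I^\alpha y$, on the constant $C_0$, and on the linear piece $C_1 t$, then combining them using the explicit $C_0,C_1$ from Lemma~\ref{a}. Once that bookkeeping is done the remaining continuity and compactness arguments are routine, and Schauder's theorem delivers a $u\in B_r$ with $Tu=u$, i.e.\ a solution of \eqref{eqn1} satisfying the stated bound.
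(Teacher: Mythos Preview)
Your proposal is correct and follows essentially the same route as the paper: Schauder's fixed point theorem applied to $T$ on the closed ball of radius $r$, with invariance of the ball established via the same $G^{*}p^{*}\psi(r)$ bound on $Tu$ and the same explicit formula for ${}^{c}D^{\alpha-1}Tu$ (which the paper writes down without derivation, whereas you derive it from the decomposition $Tu=I^{\alpha}y+C_0+C_1 t$), and continuity/compactness handled by dominated convergence and Arzel\`a--Ascoli exactly as in the paper. One small wording caution: in your equicontinuity sketch the terms $(1-s)^{\alpha-\beta-1}(t_2-t_1)$ need not tend to zero \emph{uniformly in $s$} when $\alpha-\beta<1$, but their integrals over $[0,1]$ do, which is all that is required and matches what the paper uses.
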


\begin{proof}
We prove the result via Schauder fixed point theorem.
Firstly, we prove that the operator $T$ defined by \eqref{a3} is continuous. Choose $r$ as in $(A_2)$ and define$
D=\{u\in\tilde{c}(J,\mathbb{R}),\,||u||_{\tilde{c}}\leq r\}$ a closed and
bounded subset of $\tilde{C}(J,\mathbb{R})$. Let the sequence $\{u_{n}\}$
converges to $u$ in $\tilde{C}(J,\mathbb{R})$, then as in the proof of Lemma
$2,5$ of \cite{11}, $D^{\alpha-1}u_{n}\rightarrow D^{\alpha-1}u$.
Choose $\rho
>0$ such that
\begin{equation*}
 ||u_{n}||_{\tilde{C}}\leq\rho, \,\, ||u||_{\tilde{C}}\leq \rho.
\end{equation*}
For for all $t\in J$, we have
\begin{equation*} |Tu_{n}(t)-Tu(t)|
\leq
\int^{1}_{0}|G(t,s)[f(s,u_{n}(s),D^{\alpha-1}u_{n}(s))-f(s,u(s),D^{\alpha-1}u(s))]|ds\text{
and}
\end{equation*}
\begin{eqnarray*}\begin{split}
&|D^{\alpha-1}Tu_n(t)-D^{\alpha-1}Tu(t)| \leq
\int_0^t|f(s,u_n(s),D^{\alpha-1}u_n(s))-f(s,u(s),D^{\alpha-1}u(s))|ds
\\&
+\frac{\Gamma(2-\beta)}{\Gamma(3-\alpha)(\Gamma(\alpha-\beta)}t^{2-\alpha}\int_0^1(1-s)^{\alpha-\beta-1}|f(s,u_n(s),D^{\alpha-1}u_n(s))
-f(s,u(s),D^{\alpha-1}u(s))|ds .\end{split}\end{eqnarray*} From the
continuity of $f(s,u(s),D^{\alpha-1}u(s))$ and Lebesgue dominated
convergence theorem, it follows that
  \begin{equation*}\begin{split}
&||Tu_{n}(t)-Tu(t)||_\infty \rightarrow 0\text{ and }
||D^{\alpha-1}Tu_n(t)-D^{\alpha-1}Tu(t)||_\infty\rightarrow 0
\end{split}\end{equation*} as $n\rightarrow\infty$ which implies that $T$ is continuous.

 Now we show that  $TD\subseteq D$.
Let $ u\in D$ then for each $t\in J$ ,we have
\begin{eqnarray*}
|Tu(t)|&\leq&
\int^{1}_{0}(|G(t,s)||f(s,u(s),^{c}D^{\alpha-1}u(s))|)ds \leq
G^{\star}\int^{1}_{0}(|f(s,u(s),^{c}D^{\alpha-1}u(s))|)ds\\
&\leq& G^{\star}p^{\star}\psi(||u||_{\tilde{c}})\leq
G^{\star}p^{\star}\psi(r),
\end{eqnarray*}

 \begin{eqnarray*}
 |^{c}D^{\alpha-1}Tu(t)|&\leq& \int_0^t|f(s, u(s), ^{c}D^{\alpha-1}u(s))|ds \\&&+\frac{\Gamma(2-\beta)}{\Gamma(3-\alpha)(\Gamma(\alpha-\beta))}t^{2-\alpha}
 \int_0^1(1-s)^{\alpha-\beta-1}|f(s, u(s), ^{c}D^{\alpha-1}u(s))|ds
 \end{eqnarray*}
 \begin{equation*}
 ||^{c}D^{\alpha-1}Tu(t)||_\infty\leq P^*\psi(\|u\|_{\tilde{c}})(\frac{\Gamma(3-\alpha)\Gamma(\alpha-\beta+1)+
 \Gamma(2-\beta)}{\Gamma(3-\alpha)\Gamma(\alpha-\beta+1)}).
 \end{equation*}
 Consequently,
\begin{eqnarray*}
 \|Tu(t)\|_{\tilde{c}}&\leq& \max\bigg\{{G^{\star}p^{\star}\psi(r),
 \,\, p^{\star}\psi(r)(\frac{\Gamma(3-\alpha)\Gamma(\alpha-\beta+1)+\Gamma(2-\beta)}{\Gamma(2-\beta+1)\Gamma(3-\alpha)})}\bigg\}
\leq r
\end{eqnarray*}
implies that $Tu(t)\in  D$ for all $u(t)\in D$.

Finally, we show that $T$ maps $D$ into equicontinuous set of
$\tilde{C}(J,\mathbb{R})$. Take $t_{1},t_{2}\in J$ such that $t_{1}<t_{2}$
and $u\in D$, we have
\begin{eqnarray*}
|Tu(t_{2})-Tu(t_{1})|&\leq&\int^{1}_{0}|G(t_{2},s)-G(t_{1},s)||f(s,u(s),^{c}D^{\alpha-1}u(s))|ds\\
&\leq&
p^{\star}\psi(||u||_{\tilde{c}})\int^{1}_{0}|G(t_{2},s)-G(t_{1},s)|ds,
\end{eqnarray*}
In view of the continuity of $G$, it follows that
\begin{equation*}
\|Tu(t_{2})-Tu(t_{1})\|\rightarrow 0 \text{ as }t_2\rightarrow t_1.
\end{equation*}
Further,
\begin{eqnarray*}
|(T ^{c}D^{\alpha-1}u)(t_{2})-(T ^{c}D^{\alpha-1}u)(t_{1})|&\leq&
\bigg|\int^{t_2}_0
f(s,u(s),^{c}D^{\alpha-1}u(s)ds)-\int^{t_1}_{0}f(s,u(s),^{c}D^{\alpha-1}u(s))ds\bigg|\\&&+\frac{(t_1^{2-\alpha}
-t_2^{2-\alpha})\Gamma(2-\beta)}{\Gamma(3-\alpha)\Gamma(\alpha-\beta)}|\int^{1}_{0}(1-s)^{\alpha-\beta-1}f(s,u(s),^{c}D^{\alpha-1}u(s))ds|\\
&\leq&p^{*}\psi(\max(\|u\|_\infty,
\|^{c}D^{\alpha-1}u(t))\|_{\infty})\bigg\{t_{2}-t_{1}+
\frac{(t_1^{2-\alpha}-t_2^{2-\alpha})\Gamma(2-\beta)}{\Gamma(3-\alpha)\Gamma(\alpha-\beta+1)}\bigg\}\\
&\leq&
p^{*}\psi(\|u\|_{\tilde{c}})\bigg\{t_{2}-t_{1}+\frac{(t_1^{2-\alpha}-t_2^{2-\alpha})\Gamma(2-\beta)}{\Gamma(3-\alpha)\Gamma(\alpha-\beta+1)}\bigg\},
\end{eqnarray*}
which implies that
\begin{equation*}
|T ^{c}D^{\alpha-1}u)t_{2}-(T ^{c}D^{\alpha-1}u)t_{1}|\rightarrow 0\
\text{ as }t_{2}\rightarrow t_{1}.
\end{equation*}
Hence
\begin{equation*}
\|(Tu)t_{2}-(Tu)t_{1}\|_{\tilde{C}}=\max\bigg\{\|(Tu)t_{2}-(Tu)t_{1}\|_\infty,
\|T({c}D^{\alpha-1}u)t_{2}-(T{c}D^{\alpha-1}u)t_{1}\|_\infty\bigg\}
\rightarrow 0\text{ as }t_{2}\rightarrow t_{1}.
\end{equation*}

By Arzela-Ascoli Theorem, $T$ is completely continuous  and by
Schauder fixed point theorem, $T$ has a fixed point $u$ in $D$
 such that $|u(t)|< r $ for all $t\in J$.\end{proof}
\begin{theorem}
Assume that $f:J\times \mathbb{R}\times \mathbb{R}\longrightarrow \mathbb{R}$ is continuous
and there exist a constant $k > 0$ such that
\begin{equation*}
\mid f(t,u,v)-f(t,\bar{u},\bar{v})\mid \leq k(\mid u-\bar{u}\mid
+\mid v-\bar{v}\mid),\text{ for }t\in J \text{ and } u, v, \bar{u},
\bar{v} \in \mathbb{R}.
\end{equation*}
If
\begin{equation*}
 max \{ 2G^*k , \,2k\frac{(\Gamma3-\alpha)(\Gamma\alpha-\beta +1)+\Gamma (2-\beta)}{\Gamma(3-\alpha)(\Gamma(\alpha-\beta+1)}\}<1
\end{equation*}
then the BVP (\ref{eqn1}) has a unique solution.
\end{theorem}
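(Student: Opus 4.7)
The plan is to apply the Banach contraction mapping principle to the operator $T$ defined in \eqref{a3}, working in the Banach space $\tilde{C}(J,\mathbb{R})$ with the norm $\|\cdot\|_{\tilde{c}}$. Continuity of $T$ was already established in the previous theorem, and the space is complete, so the entire task reduces to showing that $T$ is a contraction under the stated Lipschitz-type hypothesis on $f$.

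First, I would fix $u,v\in\tilde{C}(J,\mathbb{R})$ and estimate $|Tu(t)-Tv(t)|$. Inserting the Lipschitz condition into the Green function representation gives
\begin{equation*}
|Tu(t)-Tv(t)|\le \int_0^1|G(t,s)|\,k\bigl(|u(s)-v(s)|+|{}^cD^{\alpha-1}u(s)-{}^cD^{\alpha-1}v(s)|\bigr)\,ds,
\end{equation*}
and since each of the two differences inside is bounded by $\|u-v\|_{\tilde{c}}$, taking the supremum over $t\in J$ and invoking $G^*=\sup_{t\in J}\int_0^1|G(t,s)|\,ds$ yields $\|Tu-Tv\|_\infty\le 2G^*k\,\|u-v\|_{\tilde{c}}$.

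Second, I would run the same argument on the fractional derivative. Using the explicit formula for ${}^cD^{\alpha-1}Tu$ derived in the proof of the previous theorem,
\begin{equation*}
|{}^cD^{\alpha-1}Tu(t)-{}^cD^{\alpha-1}Tv(t)|\le \int_0^t|\Delta f|\,ds+\frac{\Gamma(2-\beta)\,t^{2-\alpha}}{\Gamma(3-\alpha)\Gamma(\alpha-\beta)}\int_0^1(1-s)^{\alpha-\beta-1}|\Delta f|\,ds,
\end{equation*}
where $\Delta f$ is the difference of the two integrands. Applying the Lipschitz bound, the factor $2k\|u-v\|_{\tilde{c}}$ comes out, the first integral contributes at most $t\le 1$, and the beta-type integral contributes $\frac{1}{\alpha-\beta}=\frac{\Gamma(\alpha-\beta)}{\Gamma(\alpha-\beta+1)}$; since $1<\alpha\le 2$ gives $t^{2-\alpha}\le 1$ on $J$, the supremum produces
\begin{equation*}
\|{}^cD^{\alpha-1}Tu-{}^cD^{\alpha-1}Tv\|_\infty\le 2k\,\frac{\Gamma(3-\alpha)\Gamma(\alpha-\beta+1)+\Gamma(2-\beta)}{\Gamma(3-\alpha)\Gamma(\alpha-\beta+1)}\,\|u-v\|_{\tilde{c}}.
\end{equation*}

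Finally, taking the maximum of the two displayed bounds matches exactly the quantity hypothesized to be strictly less than $1$, so $T$ is a strict contraction on $\tilde{C}(J,\mathbb{R})$ and Banach's theorem delivers a unique fixed point, which by the equivalence established through Lemma \ref{a} is the unique solution of \eqref{eqn1}. The only step requiring any care is the derivative estimate: one must handle the singularity of the kernel $(1-s)^{\alpha-\beta-1}$ carefully (it is integrable since $\alpha-\beta>0$) and observe that $t^{2-\alpha}$ is bounded on $[0,1]$; everything else is a routine combination of the Lipschitz hypothesis with the Green-function bound $G^*$.
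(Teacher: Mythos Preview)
Your proposal is correct and follows essentially the same route as the paper: apply the Lipschitz hypothesis inside the Green-function representation to bound $\|Tu-Tv\|_\infty$ by $2G^*k\|u-v\|_{\tilde c}$, use the explicit formula for ${}^cD^{\alpha-1}Tu$ to obtain the second Gamma-quotient bound, and then invoke Banach's contraction principle. Your write-up is in fact slightly more careful than the paper's (you explicitly justify the integrability of $(1-s)^{\alpha-\beta-1}$ and the boundedness of $t^{2-\alpha}$), and the remark about continuity from the previous theorem is harmless but unnecessary, since a contraction is automatically continuous.
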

\begin{proof} The proof is based on Banach Fixed point theorem, we show that the operator
$T$ is contraction. For $u, \bar{u} \in \tilde{C}(J,\mathbb{R})$ and  $t \in
J$, we have
\begin{eqnarray*}
\mid (Tu)t- (T\bar{u})t\mid &\leq& sup_{t\in j}   \int_0^1 G(t,s)\mid f(s, u(s), ^{c}D^{\alpha-1}u(s))-f(s, \bar{u}(s), ^{c}D^{\alpha-1}\bar{u}(s))\mid ds\\
&\leq&  G^* k \{\mid u- \bar{u}\mid+\mid D^{\alpha-1}u
-D^{\alpha-1}\bar{u}\mid \}\leq 2 G^* k \parallel u-
\bar{u}\parallel_{\tilde{c}}
\end{eqnarray*}
and
\begin{eqnarray*}\begin{split}
&\mid D^{\alpha-1}Tu(t) -D^{\alpha-1}T\bar{u}(t)\mid \leq \int_0^t
\mid f(s, u(s), D^{\alpha-1}u(s))-f(s, \bar{u}(s),
D^{\alpha-1}\bar{u}(s))\mid ds\\&+
\frac{\Gamma(2-\beta)t^{2-\alpha}}{\Gamma(3-\beta)\Gamma(\alpha-\beta)}\int_0^1
(1-s)^{\alpha-\beta-1}
\mid f(s, u(s), D^{\alpha-1}u(s)-  f(s, \bar{u}(s), D^{\alpha-1}\bar{u}(s))\mid ds\\
&\leq
(2k+\frac{2k\Gamma(2-\beta)}{\Gamma(3-\alpha)\Gamma(\alpha-\beta+1)})||u-\tilde{u}||_{\tilde{c}}
\leq
2k(\frac{\Gamma(3-\alpha)\Gamma(\alpha-\beta+1)+\Gamma(2-\beta)}{\Gamma(3-\alpha)\Gamma(\alpha-\beta+1)})||u-\tilde{u}||_{\tilde{c}}.
\end{split}
\end{eqnarray*}
Since
\begin{equation*}
\max\bigg\{2G^* k, 2k \bigg(\frac{\Gamma(3-\alpha)\Gamma(\alpha-\beta+1)+\Gamma(2-\beta)}{\Gamma(3-\alpha)\Gamma(\alpha-\beta+1)}\bigg)\bigg\}=d<1
\end{equation*}
it follows that
\begin{equation*}
\parallel Tu(t)-T\bar{u}(t) \parallel_c \leq d\parallel
u-\bar{u}\parallel_{\tilde{c}}
\end{equation*}
 and by Banach fixed point theorem, the B.V.P (\ref{eqn1}) has a unique
solution.
\end{proof}

\begin{example}
Consider the fractional boundary value problem
\begin{equation*}
^{c}D^{{\frac {3}{2}}}u(t)=\frac{\sin^{2}t}{11(e^{2t}+3e^{t}+1)}\bigg(3+t+5u(t)+D^{\frac{1}{2}}u(t)\bigg),
\end{equation*}
where
$u(0)=\frac{1}{2}u(1)$,\,\, $^{c}D^{\frac{1}{2}}u(0)=\frac{1}{2},\,\,^{c}D^{\frac{1}{2}}u(1)$ for $\xi=\beta=\frac{1}{2}$.\\
Here
\begin{equation*}
 f(t,u(t),D^{\frac{1}{2}}u(t))=\frac{\sin^{2}t}{11(e^{2t}+3e^{t}+1)}\bigg(3+t+5u(t)+D^{\frac{1}{2}}u(t)\bigg),
 \end{equation*}
 therefore
\begin{eqnarray*}
|f(t, u(t),D^{\frac{1}{2}}u(t))-f(t, \bar{u}(t),D^{\frac{1}{2}}\bar{u}(t))|&\leq& \frac{1}{11}(|u-\bar{u}|+|D^{\frac{1}{2}}u-D^{\frac{1}{2}}\bar{u}|)\\
&\leq& k(|u-\bar{u}|+|v-\bar{v}|)
\end{eqnarray*}
which is the condition $(A_{1})$ with $k=\frac{1}{11}.$\\
Also\\
\begin{eqnarray*}
G(t,s)&=&\left\{  \begin{array}{l}
\frac{(1-\xi)(t-s)^{\alpha-1}+\xi(1-s)^{\alpha-1}}{\Gamma\alpha(1-\xi)}-\frac{\Gamma(2-\beta)(\xi+(1-\xi)t)}{\Gamma(\alpha-\beta)(1-\xi)}(1-s)^{\alpha-\beta-1}, \\ \hspace{8 cm} \,\mbox{if}\,\,\,\,\,~0\leq s\leq t\leq 1\cr\cr
\frac{{\xi(1-s)^{\alpha-1}}}{{\Gamma\alpha(1-\xi)}}-\frac{\Gamma(2-\beta)(\xi+(1-\xi)t)}{\Gamma(\alpha-\beta)(1-\xi)}(1-s)^{\alpha-\beta-1},\hspace{.5 cm} \mbox{if}\,\,\,~0\leq t\leq s\leq 1,
\end{array}\right.
\end{eqnarray*}
Or
\begin{equation*}
G(t,s)=\bigg\{\begin{array}{c}
      \frac{\frac{1}{2}(t-s)^{\frac{1}{2}}+\frac{1}{2}(1-s)^{\frac{1}{2}}}{\frac{1}{2}\Gamma(\frac{3}{2})}- \frac{\Gamma(\frac{3}{2})(\frac{1}{2}+\frac{1}{2}t)}{\frac{1}{2}\Gamma(1)}\\
       \frac{\frac{1}{2}(1-s)^{\frac{1}{2}}}{\frac{1}{2}\Gamma(\frac{3}{2})}- \frac{(\frac{1}{2}+\frac{1}{2}t)\Gamma(\frac{3}{2})}{\frac{1}{2}\Gamma(1)}
    \end{array}
\end{equation*}
so that
\begin{eqnarray*}
G^{*}\leq \frac{\int^{1}_{0}(1-s)^{\frac{1}{2}}ds}{\frac{1}{2}\Gamma(\frac{3}{2})}+ \frac{\Gamma(\frac{3}{2})}{\frac{1}{2}}
\leq \ 3.1601\cdots.
\end{eqnarray*}which implies that
\begin{equation*}
 2kG^{*}\leq (2)(\frac{1}{11})(3.1601)=0.5745...<1.
 \end{equation*}
And
\begin{eqnarray*}
 2k\big(\frac{\Gamma(3-\alpha)\Gamma(\alpha-\beta+1) + \Gamma(2-\beta)}{\Gamma(3-\alpha)(\Gamma\alpha-\beta+1)}\big)
 = \frac{2}{11}(\frac{\Gamma(\frac{3}{2})\Gamma(2) +\Gamma(\frac{3}{2})}{\Gamma(\frac{3}{2}) \Gamma(2)}) = 0.3636\cdots< 1.
 \end{eqnarray*}
 Or
\begin{equation*}
 \max\{2kG^{*} , \frac{2k(\Gamma(3-\alpha)\Gamma(\alpha-\beta+1) +\Gamma(2-\beta)}{\Gamma(3-\alpha)\Gamma(\alpha-\beta+1)}\}< 1.
\end{equation*}
Hence by Banach  contraction mapping the given fractional bounded
value problem has a unique solution on $J \in [0,1].$
\end{example}
{\bf Conflict of interests}.  The authors declare that they have no
competing interests.
\newpage


\end{document}